\chardef\bslash=`\\ 
\def\verbatim{\interlinepenalty\@M \@verbatim
  \leftskip\@totalleftmargin\advance\leftskip2pc
  \frenchspacing\@vobeyspaces \@xverbatim}
\newtheorem{thm}{Theorem}[section]
\newtheorem{cor}[thm]{Corollary}
\newtheorem{lem}[thm]{Lemma}
\theoremstyle{definition}
\newtheorem{defin}{Definition}[section]
\theoremstyle{remark}
\numberwithin{equation}{section}
\begin{document}


\title
{There are no noncommutative soft maps}
\author{A. Chigogidze}
\address{Department of Mathematics and Statistics,
University of North Carolina at Greensboro,
317 College Avenue, 126 Petty Bldg, Greensboro, NC, 27402, USA}
\email{chigogidze@uncg.edu}
\keywords{soft map, doubly projective homomorphism}
\subjclass{Primary: 46M10; Secondary: 46B25}


\begin{abstract}{It is shown that for a map $f \colon X \to Y$ of compact spaces the unital $\ast$-homomorphism $C(f) \colon C(Y) \to C(X)$ is projective in the category $\operatorname{Mor}({\mathcal C}^{1})$ precisely when $X$ is a dendrite and $f$ is either homeomorphism or a constant.}
\end{abstract}

\maketitle
\markboth{A.~Chigogidze}
{Noncommutative soft maps}

\section{Introduction}
By Gelfand's duality any topological property of a categorical nature in the category $\mathcal{COMP}$ (= compact spaces and their continuous maps) has its counterpart in the category $\mathcal{AC}^{1}$ (= commutative unital $C^{\ast}$-algebras and their unital $\ast$-homomorphisms) which, in turn, serves as a prototype for the corresponding concept in the larger category ${\mathcal C}^{1}$ (= unital $C^{\ast}$-algebras and their unital $\ast$-homomorphisms). 

For example, $X$ is an injective object in $\mathcal{COMP}$ (i.e. $X$ is a compact absolute retract) precisely when the $C^{\ast}$-algebra $C(X)$ is a projective object in $\mathcal{AC}^{1}$. However, requirement that $C(X)$ is actually projective object in the full category ${\mathcal C}^{1}$ imposes severe restrictions back on $X$: as shown in \cite{CD} this happens if and only if $X$ is a dendrit (i.e. at most one dimensional metrizable $AR$-compactum). In other words, the class of dendrits coincides with the class of noncommutative absolute retracts.

Expanding further to the category $\operatorname{Mor}(\mathcal{COMP})$ we note that injective objects in it are also well understood and play important role in geometric topology. These are soft maps between $AR$-compacta. Recall (see, for instance, \cite[Definition 2.1.33]{chibook}) that a map $f \colon X \to Y$ of compact spaces is soft if for any compact space $B$, any closed subset $A \subset B$, and any two maps $g \colon A \to X$ and $h \colon B \to Y$ such that $f \circ g = h|A$, there exists a map $k \colon B \to X$ such that $g = k|A$ and $f \circ k = h$. Here is the diagram illustrating the situation:

\[
        \xymatrix{
              A \ar_{\operatorname{incl}}@{_{(}->}[d] \ar^{g}[rr]& & X \ar_{f}[d] \\
              B \ar^{h}[rr] \ar@{.>}^{k}[urr]& & Y  \\
    }
\]

As noted, by reversing arrows and allowing all (not necessarily commutative) unital $C^{\ast}$-algebras, we arrive to the following concept of doubly projective homomorphism. This concept was first introduced in \cite[Definition 3.1]{LP} and studied also in \cite{C}. It must be noted that in \cite{C}, as well as below, we do not assume (while \cite{LP} does) that the domain of doubly projective homomorphism is projective.

\begin{defin}\label{D:1}
A unital $\ast$-homomorphism $i \colon X \to Y$ of unital $C^{\ast}$-algebras is doubly projective if for any unital $\ast$-homomorphisms $f \colon X \to A$, $g \colon Y \to B$ and any surjective unital $\ast$-homomorphism $p \colon A \to B$ with $g \circ i = p \circ f$, there exists a unital $\ast$-homomorphism $h \colon Y \to A$ such that $f = h \circ i$ and $g = p\circ h$. In other words, any commutative diagram (of unbroken arrows) 

\[
        \xymatrix{
              B & & Y \ar_{g}[ll] \ar@{.>}_{h}[lld]\\
              A \ar^{p}[u] & & X \ar_{f}[ll] \ar_{i}[u]\\
    }
\]

\noindent with surjective $p$ can be completed by the dotted diagonal arrow with commuting triangles.
\end{defin}

\begin{lem}\label{L:retract}
Retract of a doubly projective homomorphism is doubly projective. More precisely, suppose that $i_{1} \colon X_{1} \to Y_{1}$ is doubly projective, and for a unital $\ast$-homomorphism $i_{2} \colon X_{2} \to Y_{2}$ there exist unital homomorphisms $s_{1} \colon X_{2} \to X_{1}$, $r_{1} \colon X_{1} \to X_{2}$ with $r_{1}\circ s_{1} = \operatorname{id}_{X_{2}}$ and $s_{2} \colon Y_{2} \to Y_{1}$, $r_{2} \colon Y_{1} \to Y_{2}$ with $r_{2}\circ s_{2} = \operatorname{id}_{Y_{2}}$. If $i_{1}\circ s_{1} = s_{2}\circ i_{2}$, then $i_{2}$ is also doubly projective.
\end{lem}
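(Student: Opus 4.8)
The plan is to show that $i_{2}$ satisfies the lifting property of Definition \ref{D:1} by transporting an arbitrary lifting problem for $i_{2}$ to one for $i_{1}$ along the retractions, solving the latter by the double projectivity of $i_{1}$, and then pulling the solution back. So suppose we are given unital $\ast$-homomorphisms $f \colon X_{2} \to A$, $g \colon Y_{2} \to B$ and a surjective unital $\ast$-homomorphism $p \colon A \to B$ with $g \circ i_{2} = p \circ f$; we must produce a unital $\ast$-homomorphism $h \colon Y_{2} \to A$ with $f = h \circ i_{2}$ and $g = p \circ h$.

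First I would form the pushed-forward data $f' = f \circ r_{1} \colon X_{1} \to A$ and $g' = g \circ r_{2} \colon Y_{1} \to B$, keeping the same surjection $p$. The point to verify is that $(f', g', p)$ is again an admissible lifting problem for $i_{1}$, that is, that $g' \circ i_{1} = p \circ f'$; granting this, the double projectivity of $i_{1}$ furnishes a unital $\ast$-homomorphism $h' \colon Y_{1} \to A$ with $h' \circ i_{1} = f'$ and $p \circ h' = g'$.

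Then I would set $h = h' \circ s_{2} \colon Y_{2} \to A$ and check the two required triangles. For the upper triangle, using the hypothesis $s_{2} \circ i_{2} = i_{1} \circ s_{1}$, the relation $h' \circ i_{1} = f'$, and $r_{1} \circ s_{1} = \operatorname{id}_{X_{2}}$, one computes $h \circ i_{2} = h' \circ s_{2} \circ i_{2} = h' \circ i_{1} \circ s_{1} = f' \circ s_{1} = f \circ r_{1} \circ s_{1} = f$. For the lower triangle, using $p \circ h' = g'$ and $r_{2} \circ s_{2} = \operatorname{id}_{Y_{2}}$, one gets $p \circ h = p \circ h' \circ s_{2} = g' \circ s_{2} = g \circ r_{2} \circ s_{2} = g$. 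This produces the desired diagonal $h$ and completes the argument.

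The hard part will be the compatibility claim $g' \circ i_{1} = p \circ f'$. Substituting the definitions of $f',g'$ and the given identity $p \circ f = g \circ i_{2}$, it reduces to $g \circ r_{2} \circ i_{1} = g \circ i_{2} \circ r_{1}$, and the two sides are immediately seen to agree after precomposition with $s_{1}$, since $r_{2} \circ i_{1} \circ s_{1} = r_{2} \circ s_{2} \circ i_{2} = i_{2} = i_{2} \circ r_{1} \circ s_{1}$. The real content of the lemma is therefore to promote this equality on the retract $s_{1}(X_{2})$ to the equality required on all of $X_{1}$, and I expect the careful bookkeeping of how the structure maps $s_{1},r_{1},s_{2},r_{2}$ intertwine $i_{1}$ and $i_{2}$ — rather than the construction of the final lift — to be where the essential work lies.
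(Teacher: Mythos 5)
Your construction is exactly the one in the paper: transport the problem along the retractions by setting $f' = f \circ r_{1}$, $g' = g \circ r_{2}$, lift against $i_{1}$, and put $h = h' \circ s_{2}$; your verification of the two triangles is correct and matches the paper's computation line for line. But the proof is incomplete at precisely the point you flag and then defer: the admissibility claim $g' \circ i_{1} = p \circ f'$ is never established, and — this is the crucial point — it \emph{cannot} be established from the hypotheses as stated. Concretely, take $X_{2} = Y_{2} = \mathbb{C}$, $i_{2} = \operatorname{id}_{\mathbb{C}}$, $X_{1} = Y_{1} = C([0,1])$, $i_{1} = \operatorname{id}_{C([0,1])}$ (identity homomorphisms are trivially doubly projective), let $s_{1} = s_{2}$ be the unital embedding $\lambda \mapsto \lambda \cdot 1$ of $\mathbb{C}$ into $C([0,1])$, and let $r_{1} = \operatorname{ev}_{0}$, $r_{2} = \operatorname{ev}_{1}$ be the evaluation characters at $0$ and $1$. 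All stated hypotheses hold: $r_{k}\circ s_{k} = \operatorname{id}$ and $i_{1}\circ s_{1} = s_{2}\circ i_{2}$. Yet for the lifting problem $A = B = \mathbb{C}$, $p = \operatorname{id}$, $f = g = \operatorname{id}_{\mathbb{C}}$, the transported data satisfy $g' \circ i_{1} = \operatorname{ev}_{1} \neq \operatorname{ev}_{0} = p \circ f'$. So the equality you hope to ``promote'' from $s_{1}(X_{2})$ to all of $X_{1}$ is false in general: $s_{1}$ is not epi, and no amount of bookkeeping with the stated identities will repair this.

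What is missing is the second half of the retract structure: the identity $i_{2}\circ r_{1} = r_{2}\circ i_{1}$, i.e., that the retractions (and not only the sections) intertwine the two homomorphisms. This is part of what ``retract'' means in the arrow category $\operatorname{Mor}({\mathcal C}^{1})$ — a retract there is a pair of commuting squares — and it does hold in the lemma's one application inside the proof of Theorem \ref{T:main}: there both $r_{2}\circ i_{1}$ and $i_{2}\circ r_{1}$ send $\varphi \in C(Y)$ to the function $z \mapsto \varphi(f(z))$ on $Z$, precisely because $f(Z) \subseteq [0,1]$. With this extra identity the admissibility check is a one-liner, $g \circ r_{2} \circ i_{1} = g \circ i_{2} \circ r_{1} = p \circ f \circ r_{1}$, and the rest of your argument goes through verbatim. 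For what it is worth, the paper's own proof invokes double projectivity of $i_{1}$ on the same transported problem without checking admissibility at all, so you have identified a genuine subtlety that the paper glosses over; but identifying the obstacle and deferring it is not resolving it, and the resolution is the omitted commuting square, not a cleverer use of the hypotheses as literally stated.
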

\begin{proof}
Consider the following diagram of unbroken arrows 
\[
        \xymatrix{
              B & & Y_{2} \ar_{g}[ll] \ar@{.>}_{h}[lld]\\
              A \ar^{p}[u] & & X_{2} \ar_{f}[ll] \ar_{i_{2}}[u]\\
    }
\]

\noindent with $p$ is surjective as in Definition \ref{D:1}. We need to construct a dotted $\ast$-homomorphism $h$ making both triangular diagrams commutative.

Since $i_{1}$ is doubly projective there exists a unital $\ast$-homomorphism $q \colon Y_{1} \to A$ such that $g \circ r_{2} = p\circ q$ and $f\circ r_{1} = q\circ i_{1}$. Here is the full diagram

\[
        \xymatrix{
              B & & Y_{2}  \ar_{g}[ll] \ar@{.>}_(0.6){h}[lldd] \ar@/_0.5pc/_{s_{2}}[rr]& & Y_{1} \ar@{.>}^(0.40){q}[lllldd] \ar_{r_{2}}[ll]\\
 & & & & \\
              A \ar^{p}[uu] & & X_{2} \ar_(0.35){f}[ll] \ar_(0.3){i_{2}}[uu] \ar@/_0.5pc/_{s_{1}}[rr] & & X_{1} \ar_{r_{1}}[ll] \ar_{i_{1}}[uu]\\
    }
\]

\noindent Let $h = q\circ s_{2}$. It only remains to note that
\[ f = f \circ r_{1}\circ s_{1} = q \circ i_{1}\circ s_{1} = q\circ s_{2}\circ i_{2} = h\circ i_{2} \]

\noindent and

\[ g = g \circ r_{2}\circ s_{2} = p \circ q \circ s_{2} = p \circ h .\]
\end{proof}

\begin{thm}\label{T:main}
Let $f \colon X \to Y$ be a surjective map of a compact space $X$ onto a non-trivial Peano continuum $Y$. If $C(f) \colon C(Y) \to C(X)$ is doubly projective, then $f$ is a homeomorphism.
\end{thm}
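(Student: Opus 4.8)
The plan is to show that $f$ is injective; since $X$ is compact and $Y$ is Hausdorff, a continuous bijection is automatically a homeomorphism. First I would extract the purely topological consequences of double projectivity by feeding \emph{commutative} test algebras into Definition \ref{D:1}. Taking $A=C(D)$, $B=C(E)$ for a compactum $D$ with a closed subset $E$, with $p$ the (surjective) restriction map, $\phi=C(v)$ and $g=C(u)$, the diagram of Definition \ref{D:1} becomes exactly the softness lifting problem for $f$, and the diagonal filler $h$ dualizes to the required extension. Hence $C(f)$ doubly projective forces $f$ to be \emph{soft}. Two consequences follow: feeding a convergent sequence $D=\{y_{0}\}\cup\{y_{n}\}$ with $E=\{y_{0}\}$ produces preimages converging to a prescribed point, so $f$ is \emph{open}; and taking $v\equiv y_{0}$ constant shows each fiber $f^{-1}(y_{0})$ is an absolute extensor, hence a compact $AR$, in particular connected.

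Next I would isolate the one place where noncommutativity is essential. Using Lemma \ref{L:retract} with $i_{1}=C(f)$, the unit embedding $i_{2}\colon\mathbb{C}\to C(X)$ is a retract of $C(f)$: take $s_{2}=r_{2}=\operatorname{id}_{C(X)}$, let $s_{1}\colon\mathbb{C}\to C(Y)$ be the unit and $r_{1}\colon C(Y)\to\mathbb{C}$ any character (an evaluation), so that $r_{1}s_{1}=\operatorname{id}$ and $i_{1}s_{1}=s_{2}i_{2}$ both equal the unit of $C(X)$. The Lemma then makes $\mathbb{C}\to C(X)$ doubly projective, which is precisely the statement that $C(X)$ is projective in ${\mathcal C}^{1}$. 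By \cite{CD} this means that $X$ is a \emph{dendrite}. This is the crux of the matter: projectivity in the full noncommutative category ${\mathcal C}^{1}$, rather than in $\mathcal{AC}^{1}$, is what forces $\dim X\le 1$, whereas commutatively $C(X)$ would only have to be an $AR$-compactum of possibly arbitrary dimension.

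With $X$ a one-dimensional, metrizable, uniquely arcwise connected dendrite and $f$ open with connected fibers, I would finish by a local argument. Suppose $f$ is not injective and pick $a\neq b$ in a fiber $F=f^{-1}(y_{0})$. Since $F$ is a connected subcontinuum of the dendrite $X$ containing $a$ and $b$, the unique arc $[a,b]$ of $X$ lies in $F$, so $f\equiv y_{0}$ on $[a,b]$. A dendrite has only countably many branch points, so I may choose a point $x_{0}\in(a,b)$ of order $2$; then $x_{0}$ has arbitrarily small connected neighborhoods $U$ that are subarcs of $[a,b]\subseteq F$, whence $f(U)=\{y_{0}\}$. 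But $Y$, being a nondegenerate Peano continuum, has no isolated points, so $\{y_{0}\}$ is not a neighborhood of $y_{0}$, contradicting openness of $f$. Therefore every fiber is a singleton, $f$ is a continuous bijection of compacta, and hence a homeomorphism.

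The main obstacle, I expect, is the bookkeeping of the second paragraph: recognizing that the entire force of the noncommutative hypothesis can be funnelled, via the retract Lemma \ref{L:retract} and \cite{CD}, into the single constraint that $X$ be a dendrite, after which the (commutative) softness consequences suffice. A heavier alternative to the non-branch-point argument would invoke \v{S}\v{c}epin's structure theory for soft maps (the local triviality underlying the cited theorems) to realize $f$ over an arc $J\subseteq Y$ as a trivial bundle $J\times F$ and contradict $\dim X\le 1$ directly; but the order-$2$ point argument seems more economical and sidesteps the $Z$-set machinery.
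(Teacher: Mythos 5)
Your first two paragraphs are correct, and the second is in fact a genuine improvement on the paper's own route. The paper only restricts to $Z=f^{-1}([0,1])$, manufactures a retraction $s\colon X\to Z$ from softness, and then combines Lemma \ref{L:retract} with \cite[Lemma 5.3]{C} to conclude that $Z$ is a dendrite. Your retraction of $C(f)$ onto the unit embedding $\mathbb{C}\to C(X)$ (using a character of $C(Y)$) shows directly that $C(X)$ itself is projective in ${\mathcal C}^{1}$, hence that \emph{all} of $X$ is a dendrite, bypassing \cite[Lemma 5.3]{C} entirely; the hypotheses of Lemma \ref{L:retract} do check out ($r_{1}\circ s_{1}=\operatorname{id}_{\mathbb{C}}$, $r_{2}\circ s_{2}=\operatorname{id}_{C(X)}$, and $i_{1}\circ s_{1}=s_{2}\circ i_{2}$ is just unitality of $C(f)$), and double projectivity of the unit embedding is verbatim projectivity of $C(X)$ in ${\mathcal C}^{1}$.

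The endgame, however, has a genuine gap. You claim that a point $x_{0}\in(a,b)$ of order $2$ has arbitrarily small connected neighborhoods that are subarcs of $[a,b]$. That is false in a general dendrite: order $2$ means only that $X\setminus\{x_{0}\}$ has exactly two components (equivalently, small neighborhoods with two-point boundary); it does not prevent $x_{0}$ from being a limit of branch points. Consider a ``hairy arc'': an arc with short hairs attached at a countable dense set of attachment points, lengths tending to zero --- this is a dendrite, every non-attachment interior point of the arc has order $2$, yet no interior point of the arc has \emph{any} neighborhood contained in the arc, since every neighborhood swallows initial segments of nearby hairs. So $f(U)=\{y_{0}\}$ is not forced; openness of $f$ would be directly contradicted only if the fiber had nonempty interior, which you have not shown (and which need not hold a priori, since a nondegenerate fiber could be nowhere dense in $X$). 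The statement you are implicitly relying on --- that a monotone open map of a dendrite onto a nondegenerate continuum is injective --- is true, but proving it takes more work (e.g., openness plus compactness make $y\mapsto f^{-1}(y)$ continuous in the Hausdorff metric, and pairwise disjoint subcontinua of a dendrite cannot converge to a nondegenerate subcontinuum, by a bridge-point argument). The cheap repair is the paper's own finish, which you get for free: softness yields sections $i_{0},i_{1}$ with $i_{k}(y_{0})=x_{k}$, the open set $V=\{y\colon i_{0}(y)\neq i_{1}(y)\}$ contains an arc $J$, and $i_{0}(J)$, $i_{1}(J)$ together with arcs inside the two end fibers (fibers are connected, and connected subsets of a dendrite are arcwise connected) form a simple closed curve in $X$ --- impossible, since a dendrite is uniquely arcwise connected.
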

\begin{proof}
Assume the contrary and let $y_{0} \in Y$ be point such that $|f^{-1}(y_{0})| > 1$. Since $C(f)$ is doubly projective in the category ${\mathcal C}^{1}$ it is doubly projective in the smaller category ${\mathcal AC}^{1}$. By Gelfand's duality the latter means precisely that $f$ is a soft map. Choose points $x_{0}, x_{1} \in f^{-1}(y_{0})$ with $x_{0} \neq x_{1}$. Softness of $f$ guarantees that there exist two sections $i_{0}, i_{1} \colon Y \to X$ of $f$ such that $i_{k}(y_{0}) = x_{k}$ for each $k = 0,1$. Note that the set $V = \{ y \in Y \colon i_{0}(y) \neq i_{1}(y)\}$ is a non-empty (since $y_{0} \in V$) open subset of $Y$ and in view of our assumption contains a homeomorphic copy of the segment $[0,1] \subset V$ (i.e. geodesic segment in $V$ between two points - denoted by $0$ and $1$). Let $Z = f^{-1}([0,1])$ and fix a retraction $r \colon Y \to [0,1]$. Since $f|Z \colon Z \to [0,1]$ is soft there exists a retraction $s \colon X \to Z$ such that $f\circ s = r \circ f$. Then, by Lemma \ref{L:retract}, $C(f|Z) \colon C([0,1]) \to C(Z)$ is doubly projective. Since $C([0,1])$ is projective in ${\mathcal C}^{1}$ we conclude by \cite[Lemma 5.3]{C} that $C(Z)$ is projective in ${\mathcal C}^{1}$. Consequently, by \cite[Theorem 4.3]{CD}, $Z$ is a dendrite, in particular, $\dim Z =1$. Consider the fiber $f^{-1}(0) \subset Z$. Since $f$ is soft, $f^{-1}(0)$ is a non-trivial absolute retract and consequently contains a segment $[i_{0}(0), i_{1}(0)]$ connecting the points $i_{0}(0)$ and $i_{1}(0)$. Similarly, fiber $f^{-1}(1)$ contains a segment $[i_{0}(1), i_{1}(1)]$ connecting the points $i_{0}(1)$ and $i_{1}(1)$. Clearly the union $S$ of these four segments $[i_{0}(0), i_{1}(0)]$, $i_{1}([0,1])$, $[i_{0}(0), i_{1}(0)]$ and $i_{0}([0,1])$ is homeomorphic to the circle $S^{1}$. Since $\dim Z = 1$, there exists retraction $p \colon Z \to S$. But this is impossible because $Z$ is an absolute retract.
\end{proof}

\begin{cor}
Let $f \colon X \to Y$ be a map of compact spaces. Then the following conditions are equivalent:
\begin{itemize}
  \item[(i)]
$C(f) \colon C(Y) \to C(X)$ is a projective object of the category $\operatorname{Mor}({\mathcal C}^{1})$; 
  \item[(ii)] 
$X$ is a dendrit and $f$ is either a homeomorphism or a constant map.   
\end{itemize}
\end{cor}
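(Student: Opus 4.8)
The plan is to prove the corollary by separating the equivalence into two implications, and within each direction handling the two cases (homeomorphism vs. constant). The key reduction is that ``projective in $\operatorname{Mor}(\mathcal{C}^1)$'' is precisely the diagrammatic condition of Definition \ref{D:1}, i.e. being doubly projective, so the entire corollary is really a translation of Theorem \ref{T:main} together with the known characterization of dendrits as noncommutative absolute retracts from \cite{CD}.

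For the direction (i) $\Rightarrow$ (ii), I would first observe that if $C(f)$ is doubly projective then, by taking $Y_2 = Y_1 = B$ trivially (or equivalently by choosing the right lifting diagram), the object $C(X)$ must itself be projective in $\mathcal{C}^1$; concretely, doubly projectivity of $i\colon C(Y)\to C(X)$ forces liftings that, specialized appropriately, show $C(X)$ is a projective object. Hence by \cite[Theorem 4.3]{CD} (the result identifying dendrits with noncommutative absolute retracts, already invoked inside the proof of Theorem \ref{T:main}) the space $X$ is a dendrit. It then remains to pin down $f$. If $f$ is constant we are done; otherwise $f$ is non-constant, so its image $Y = f(X)$ is a non-trivial continuum. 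The hard part, and the main obstacle, is to reduce to the situation of Theorem \ref{T:main}, which requires $Y$ to be a non-trivial \emph{Peano} continuum and $f$ to be surjective onto it. Surjectivity onto $Y=f(X)$ is automatic; and since $X$ is a dendrit it is a Peano continuum, whence its continuous image $Y$ is a Peano continuum by the Hahn--Mazurkiewicz theorem. Thus Theorem \ref{T:main} applies and yields that $f$ is a homeomorphism, completing this case.

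For the converse (ii) $\Rightarrow$ (i), I would argue that in both admissible cases $C(f)$ is doubly projective. If $f$ is a homeomorphism, then $C(f)$ is an isomorphism of $C^\ast$-algebras, and an isomorphism is trivially doubly projective: given the lifting square of Definition \ref{D:1}, one simply sets $h = f \circ C(f)^{-1}$ (transporting the map $f\colon X\to A$ across the isomorphism), and both required triangles commute by direct substitution. If instead $X$ is a dendrit and $f$ is constant, then $C(X)$ is projective in $\mathcal{C}^1$ by \cite[Theorem 4.3]{CD}, and $C(f)\colon C(Y)\to C(X)$ factors through the scalars $\mathbb{C}$; one then combines projectivity of $C(X)$ (to lift the map $C(X)\to A$) with the triviality of the constant homomorphism (to ensure compatibility with $C(Y)$) to produce the diagonal $h$. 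I expect this gluing of the projective lift with the constant factorization to be the only place demanding genuine care, so I would isolate it as the technical core of the converse.

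Putting these together establishes the equivalence. I anticipate the genuinely substantive content is concentrated in the forward direction's appeal to Theorem \ref{T:main}, and the only subtlety worth spelling out is the verification that the hypotheses of that theorem (non-triviality, the Peano property, surjectivity) are all met once one knows $X$ is a dendrit and $f$ is non-constant; everything else is a formal manipulation of the lifting diagrams.
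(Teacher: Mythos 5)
The central flaw is your opening identification: being a projective object of $\operatorname{Mor}(\mathcal{C}^{1})$ is \emph{not} the same thing as being doubly projective in the sense of Definition \ref{D:1}, and the paper never claims this. The paper extracts two \emph{separate} consequences from (i): first, by general nonsense (lifting in the arrow category against suitably chosen levelwise surjections), \emph{both} $C(X)$ and $C(Y)$ are projective objects of $\mathcal{C}^{1}$, so $X$ \emph{and} $Y$ are dendrites by \cite{CD}; second, by the cited result \cite[Proposition 5.11]{C}, condition (i) implies that $C(f)$ is doubly projective. The converse of that implication is false, and your argument breaks precisely where you rely on it: you claim that double projectivity of $C(f)$ alone forces $C(X)$ to be projective ``by taking $Y_2=Y_1=B$ trivially.'' No specialization of the diagram in Definition \ref{D:1} does this. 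Concretely, for \emph{any} compactum $X$ the identity $\operatorname{id}_{C(X)}$ is doubly projective (in Definition \ref{D:1} take $h=f$), yet $C(X)$ is projective only when $X$ is a dendrite; so a doubly projective homomorphism need not have projective codomain, and for a non-dendrite $X$ the identity is doubly projective without being projective in $\operatorname{Mor}(\mathcal{C}^{1})$. Without this step you never obtain that $X$ is a dendrite, and the forward direction has no starting point.

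The same confusion undermines your converse: verifying Definition \ref{D:1} for $C(f)$ (the isomorphism case and the constant case) does not establish (i), since projectivity in $\operatorname{Mor}(\mathcal{C}^{1})$ requires lifting \emph{pairs} of homomorphisms against levelwise surjections between \emph{arbitrary} arrows --- a strictly stronger demand, as the identity example above shows. What the paper dismisses as trivial is the construction of such pairs of lifts using projectivity of $C(X)$ and $C(Y)$, not a verification of double projectivity. Two further gaps in your forward direction are worth recording, since your route (via the Hahn--Mazurkiewicz theorem applied to $f(X)$, rather than the paper's route of knowing $Y$ itself is a dendrite) genuinely needs them: to apply Theorem \ref{T:main} to $f\colon X\to f(X)$ you must check that the corestricted homomorphism $C(f(X))\to C(X)$ is doubly projective (this does follow by factoring $C(f)$ through the surjection $C(Y)\to C(f(X))$ and using that surjections are epimorphisms, but it has to be said); and even then the theorem only yields that $f$ is a homeomorphism onto $f(X)$, i.e.\ an embedding, so you still owe an argument ruling out non-surjective embeddings before you may assert, as (ii) does, that $f$ is a homeomorphism.
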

\begin{proof}
(i) $\Longrightarrow$ (ii). General nonsense easily implies that both $C(X)$ and $C(Y)$ are projective in ${\mathcal C}^{1}$. Thus, by \cite{CD}, $X$ and $Y$ are dendrits. Also \cite[Proposition 5.11]{C} guarantees that $C(f)$ is doubly projective. By \ref{T:main}, $f$ is either constant or a homeomorphism. 

(ii) $\Longrightarrow$ (i) is trivial.
\end{proof}


\end{document}